\documentclass[preprint,12pt]{elsarticle}
\usepackage[T1]{fontenc}
\usepackage{lmodern,amsmath,amssymb,amsthm,microtype}
\usepackage{booktabs}
\usepackage{needspace}
\usepackage[skip=5pt plus 1pt minus 1pt,indent=0pt]{parskip}
\AtBeginEnvironment{frontmatter}{\setlength{\parskip}{0pt}}
\usepackage{xcolor}
\usepackage[colorlinks=true,allcolors=blue!45!black]{hyperref}
\hypersetup{pdftitle={Comment on ``The polarization hierarchy for polynomial optimization over convex bodies, with applications to nonnegative matrix rank''},pdfauthor={Julius A. Zeiss}}
\journal{Linear Algebra and its Applications}
\makeatletter\def\NAT@cmt{,~}\makeatother
\makeatletter
\patchcmd{\pprintMaketitle}{\vskip36pt}{\vskip10pt}{}{}
\makeatother

\numberwithin{equation}{section}
\newcommand{\blockparagraphs}{%
  \setlength{\parskip}{0pt}%
  \setlength{\parindent}{1em}%
  \setlength{\listparindent}{1em}%
}
\newtheoremstyle{statement}
  {7pt plus 1pt minus 1pt}{7pt plus 1pt minus 1pt}
  {\itshape\blockparagraphs}{0pt}{\bfseries}{.}{.5em}{}
\newtheoremstyle{explanation}
  {7pt plus 1pt minus 1pt}{7pt plus 1pt minus 1pt}
  {\normalfont\blockparagraphs}{0pt}{\bfseries}{.}{.5em}{}
\theoremstyle{statement}
\newtheorem{theorem}{Theorem}[section]
\newtheorem{proposition}[theorem]{Proposition}
\newtheorem{corollary}[theorem]{Corollary}
\theoremstyle{explanation}
\newtheorem{example}[theorem]{Example}
\newtheorem{remark}[theorem]{Remark}
\AtBeginEnvironment{theorem}{\Needspace{5\baselineskip}}
\AtBeginEnvironment{proposition}{\Needspace{5\baselineskip}}
\AtBeginEnvironment{corollary}{\Needspace{5\baselineskip}}
\AtBeginEnvironment{example}{\Needspace{5\baselineskip}}
\AtBeginEnvironment{remark}{\Needspace{5\baselineskip}}
\AtBeginEnvironment{proof}{\blockparagraphs}
\AtBeginEnvironment{example}{\pushQED{\qed}}
\AtEndEnvironment{example}{\popQED}
\AtBeginEnvironment{remark}{\pushQED{\qed}}
\AtEndEnvironment{remark}{\popQED}

\newcommand{\R}{\mathbb R}
\newcommand{\id}{\mathrm{id}}
\newcommand{\tr}{\operatorname{tr}}
\newcommand{\rank}{\operatorname{rank}}
\newcommand{\conv}{\operatorname{conv}}
\newcommand{\supp}{\operatorname{supp}}
\newcommand{\ran}{\operatorname{ran}}
\newcommand{\Sym}{\operatorname{Sym}}
\newcommand{\Ext}{\operatorname{Ext}}
\newcommand{\PSD}{\mathbb S_+}
\newcommand{\tmax}{\mathbin{\widehat\otimes}}
\newcommand{\Fcal}{\mathcal F}
\newcommand{\Kn}[1]{\mathcal K_{#1}}
\newcommand{\Sreg}{\mathcal S}
\newcommand{\Sn}{\mathfrak S}
\newcommand{\norm}[1]{\lVert#1\rVert}
\newcommand{\Hc}[1]{$(\mathrm H_{#1})$}
\newcommand{\one}{\mathbf 1}

\begin{document}

\begin{frontmatter}
\title{Comment on ``The polarization hierarchy for polynomial optimization over convex bodies, with applications to nonnegative matrix rank''}

\author{Julius A. Zeiss}
\ead{jzeiss@physik.rwth-aachen.de}
\address{Institute for Quantum Information, RWTH Aachen University, Aachen, Germany}

\begin{abstract}
We revisit the feasibility and convergence claims for the polarization hierarchy of Pl\'avala, Ligthart and Gross [Linear Algebra Appl.\ 723 (2025) 15--32]. Counterexamples show that the stated assumption is insufficient for the feasibility characterization and for convergence of the hierarchy with the polarization constraint. We establish corrected results under a stronger algebraic condition and supply a missing compactness argument.
\end{abstract}

\begin{keyword}
Polarization hierarchy \sep Polynomial optimization \sep De Finetti theorem \sep General probabilistic theories \sep Positive semidefinite cone \sep Linear programming
\MSC[2020] 90C23 \sep 90C05 \sep 90C22 \sep 52A20 \sep 60G09
\end{keyword}
\end{frontmatter}

\section{Introduction}\label{sec:intro}
Let $K_A$ and $K_B$ be compact convex sets, and let $f:K_A\times K_B\to V$ and $p:K_A\times K_B\to\R$ be affine in each argument. The polarization hierarchy of~\cite{PLG} is a sequence of convex relaxations of the problem
\begin{equation}\label{eq:original}
\Fcal=\{(a,b)\in K_A\times K_B:f(a,b)=0\},\qquad p^*=\min_{(a,b)\in\Fcal}p(a,b),
\end{equation}
formulated over maximal tensor products of copies of $K_A$ and $K_B$. If $K_A$ and $K_B$ are polytopes, each relaxation is a linear program. The constraint $f(a,b)=0$ enters through a linear map $\Pi:V\otimes V\to W$, the \emph{polarization map}, which is assumed to satisfy
\begin{equation*}\label{eq:H2}\tag{$\mathrm H_2$}
\Pi(v\otimes v+w\otimes w)=0\quad\Longrightarrow\quad v=w=0\qquad(v,w\in V).
\end{equation*}
Under this condition, \cite[Thm.~3]{PLG} characterizes $\Fcal\ne\varnothing$ by bicompatible sequences, and \cite[Thm.~5]{PLG} asserts that the relaxations converge to $p^*$.

The de Finetti theorem \cite[Lem.~2]{PLG} represents a bicompatible sequence by a probability measure $\mu$ on $K_A\times K_B$. For a sequence satisfying the polarization constraint, this representation gives
\[
\Pi(M)=0,\qquad M=\int f(a,b)\otimes f(a,b)\,d\mu(a,b).
\]
Here $f(a,b)$ is the constraint residual, which vanishes exactly at feasible pairs. Fixing an inner product on $V$, we regard $M$ as its second-moment operator under $\mu$. Each outer product $f(a,b)\otimes f(a,b)$ is positive semidefinite, since
\[
\bigl\langle z,\,[f(a,b)\otimes f(a,b)]z\bigr\rangle=\langle z,f(a,b)\rangle^2\ge0\qquad(z\in V).
\]
Its average $M$ is therefore positive semidefinite. Its trace is the average squared constraint violation:
\[
\tr M=\int\norm{f(a,b)}^2\,d\mu(a,b).
\]
If $M=0$, then $f(a,b)=0$ for $\mu$-almost every pair $(a,b)$, so the original constraint has a solution. The proof of \cite[Thm.~3]{PLG} therefore seeks to deduce $M=0$ from $\Pi(M)=0$.

The difficulty lies in this last implication. Since $M$ is positive semidefinite, its spectral decomposition gives
\[
M=\sum_{j=1}^{r}v_j\otimes v_j,\qquad r=\rank M\le\dim V.
\]
By linearity of $\Pi$, the condition $\Pi(M)=0$ becomes
\[
\sum_{j=1}^{r}\Pi(v_j\otimes v_j)=0.
\]
Condition \Hc{2} forces every $v_j$ to vanish when $r\le2$, but does not exclude cancellation when $r\ge3$. Thus \Hc{2} does not, in general, justify the implication $\Pi(M)=0\Rightarrow M=0$. We show that this gap is genuine and close it:
\begin{itemize}
\item Exs.~\ref{ex:feasibility} and~\ref{ex:optimization} are counterexamples to \cite[Thm.~3]{PLG} and to the first assertion of \cite[Thm.~5]{PLG} for a polarization map satisfying \Hc{2}.
\item Both statements hold if the kernel of $\Pi$ contains no nonzero positive semidefinite operator (Prop.~\ref{prop:equiv}, Thms.~\ref{thm:polar} and~\ref{thm:convergence}). Equivalently, $\Pi(\sum_{j=1}^{D}v_j\otimes v_j)=0$ must imply $v_1=\cdots=v_D=0$, where $D=\dim V$. The number $D$ cannot be lowered in general (Prop.~\ref{prop:sharp}), but a smaller number suffices if $\Pi$ has small rank (Prop.~\ref{prop:rank}). This strengthened condition is necessary for \cite[Thm.~3]{PLG} to hold for all $K_A$, $K_B$ and $f$ (Prop.~\ref{prop:necessary}).
\item The proof of \cite[Thm.~5]{PLG} takes limits of two-copy marginals only and does not justify that the limit extends to a bicompatible sequence (Ex.~\ref{ex:parity}). Thm.~\ref{thm:convergence} supplies the missing argument.
\item The three polarization maps proposed in~\cite{PLG} already satisfy this strengthened condition (Rem.~\ref{rem:maps}), so convergence holds for these choices. The stronger hierarchy of \cite[Thm.~4]{PLG}, which does not use a polarization map, also converges (Cor.~\ref{cor:strong}). The numerical results in \cite[Fig.~1]{PLG} are unaffected (Rem.~\ref{rem:application}).
\end{itemize}
Sec.~\ref{sec:lp} verifies Ex.~\ref{ex:optimization} by linear programming.

\section{Preliminaries}\label{sec:prelim}
All vector spaces are real and finite-dimensional. A \emph{state space} is a nonempty compact convex subset $K$ of a vector space. We write $A(K)$ for the space of affine functions on $K$ and $\one_K$ for the constant function $1$, called the \emph{unit effect}. We identify $x\in K$ with the evaluation functional in $A(K)^*$. For state spaces $K_A,K_B$ let $E=A(K_A)^*\otimes A(K_B)^*$, and let $K_A\tmax K_B\subset E$ be the maximal tensor product \cite[Eq.~(4)]{PLG}.

As in~\cite{PLG}, a map on $K_A\times K_B$ that is affine in each argument is identified with the linear map on $E$ that agrees with it on the product states $a\otimes b$. This applies to $f$ and $p$. We assume $\Fcal\ne\varnothing$ whenever $p^*$ is used; then the minimum in~\eqref{eq:original} is attained, since $\Fcal$ is compact and $p$ is continuous.

We fix an arbitrary inner product on the constraint space $V$ and identify $u\otimes v\in V\otimes V$ with the operator
\[
x\longmapsto\langle v,x\rangle u.
\]
Under this identification, the space $\Sym^2(V)$ of symmetric tensors corresponds to the space of self-adjoint operators on $V$, and $\PSD(V)$ denotes the cone of positive semidefinite operators. In particular, $v\otimes v$ is positive semidefinite for every $v\in V$.

We write $\tr$ for the ordinary trace of operators on $V$. Under the above identification,
\[
\tr(u\otimes v)=\langle v,u\rangle,\qquad \tr(v\otimes v)=\norm v^2.
\]

For $n\ge1$ let
\[
\Kn n=K_A^{\tmax n}\tmax K_B^{\tmax n},
\]
with the tensor factors ordered as $A_1\cdots A_nB_1\cdots B_n$, and let $\Sreg_n$ be the linear map that reorders them as $A_1B_1\cdots A_nB_n$ \cite[Eq.~(8)]{PLG}. For $k\le n$, the marginalization map $R_{n,k}$, also called the partial trace, applies the unit effect $\one_{K_A}$ to the discarded factors $A_{k+1},\dots,A_n$ and $\one_{K_B}$ to $B_{k+1},\dots,B_n$; it maps $\Kn n$ into $\Kn k$. For $\sigma,\tau$ in the symmetric group $\Sn_n$, the maps $S_\sigma^A$ and $S_\tau^B$ permute the factors $A_1,\dots,A_n$ and $B_1,\dots,B_n$, respectively. A sequence $(y_n)_{n\ge1}$ with $y_n\in\Kn n$ is \emph{bicompatible} \cite[Def.~1]{PLG} if
\begin{equation}\label{eq:bicompatible}
(S_\sigma^A\otimes S_\tau^B)y_n=y_n\quad(\sigma,\tau\in\Sn_n)\qquad\text{and}\qquad R_{n,k}y_n=y_k\quad(k\le n).
\end{equation}
By \cite[Lem.~2]{PLG}, every bicompatible sequence has a Borel probability measure $\mu$ on $K_A\times K_B$ such that
\begin{equation}\label{eq:definetti}
y_n=\int a^{\otimes n}\otimes b^{\otimes n}\,d\mu(a,b)\qquad(n\ge1).
\end{equation}
We call $\mu$ a \emph{representing measure} of $(y_n)$.

\paragraph{The hierarchies}
Let $\Pi:V\otimes V\to W$ be linear. For $n\ge2$ let $\Omega_n^\Pi$ be the set of $y\in\Kn n$ such that
\begin{equation}\label{eq:hierarchy}
(S_\sigma^A\otimes S_\tau^B)y=y\quad(\sigma,\tau\in\Sn_n)\qquad\text{and}\qquad\Pi\bigl((f\otimes f)\Sreg_2(R_{n,2}y)\bigr)=0,
\end{equation}
and let
\[
p_n^\Pi=\inf\{p(R_{n,1}y):y\in\Omega_n^\Pi\},
\]
with $\inf\varnothing=+\infty$. This is the relaxation \cite[Eq.~(28)]{PLG}, with the order of the tensor factors and the partial traces written explicitly. The stronger relaxation \cite[Eq.~(29)]{PLG} replaces the second constraint in~\eqref{eq:hierarchy} by
\begin{equation}\label{eq:strong}
(f\otimes\id_E^{\otimes(n-1)})\Sreg_n(y)=0\qquad\text{in }V\otimes E^{\otimes(n-1)}.
\end{equation}
We denote its feasible set by $\Omega_n$ and its value by $q_n$.

If $(a,b)\in\Fcal$, then $y_n=a^{\otimes n}\otimes b^{\otimes n}$ satisfies~\eqref{eq:hierarchy} and~\eqref{eq:strong}, because
\[
\begin{aligned}
(f\otimes f)\Sreg_2(y_2)&=f(a,b)\otimes f(a,b),\\
(f\otimes\id_E^{\otimes(n-1)})\Sreg_n(y_n)&=f(a,b)\otimes(a\otimes b)^{\otimes(n-1)}.
\end{aligned}
\]
Moreover, $p(R_{n,1}y_n)=p(a,b)$. Hence, for every linear map $\Pi$,
\begin{equation}\label{eq:upper}
\Fcal\ne\varnothing\quad\Longrightarrow\quad p_n^\Pi\le p^*\ \text{ and }\ q_n\le p^*\qquad(n\ge2).
\end{equation}
In particular, $\Omega_n^\Pi=\varnothing$ for some $n$ certifies $\Fcal=\varnothing$, whatever $\Pi$ is.

Thm.~3 of~\cite{PLG} states that if $\Pi$ satisfies \Hc{2}, then $\Fcal\ne\varnothing$ if and only if there is a bicompatible sequence $(y_n)$ with $\Pi((f\otimes f)\Sreg_2(y_2))=0$. Thm.~5 of~\cite{PLG} states that, for such $\Pi$, $\lim_{n\to\infty}p_n^\Pi=p^*$ and $\lim_{n\to\infty}q_n=p^*$.

\section{Condition \texorpdfstring{$(\mathrm H_2)$}{(H2)} is insufficient}\label{sec:counter}
Let $(y_n)$ be bicompatible with $\Pi((f\otimes f)\Sreg_2(y_2))=0$, and let $\mu$ be a representing measure. Inserting~\eqref{eq:definetti} gives
\begin{equation}\label{eq:moment}
\Pi(M)=0,\qquad M=\int f(a,b)\otimes f(a,b)\,d\mu(a,b).
\end{equation}
The proof of \cite[Thm.~3]{PLG} then considers the set $W^+=\{\Pi(v\otimes v):v\in V\}$. Condition \Hc{2} implies $W^+\cap(-W^+)=\{0\}$, and \cite[p.~20, after Eq.~(17)]{PLG} concludes, citing \cite[Prop.~B.8]{Plavala}, that the dual cone $(W^+)^*=\{\varphi\in W^*:\varphi\ge0\text{ on }W^+\}$ is generating, that is, spans $W^*$.

However, $(W^+)^*$ is also the dual cone of the closed convex cone generated by $W^+$. Indeed, a linear functional nonnegative on $W^+$ is also nonnegative on all nonnegative linear combinations of its elements and on their limits. These combinations can involve more than two generators, whose cancellation is not excluded by \Hc{2}. Thus \Hc{2} does not imply that the closed convex cone generated by $W^+$ is pointed. Since a closed convex cone is pointed if and only if its dual cone is generating, this pointedness is exactly what the argument requires; given \Hc{2}, it is equivalent to the corrected condition of Sec.~\ref{sec:condition} (Prop.~\ref{prop:equiv}(v)).

Ex.~\ref{ex:feasibility} exhibits this obstruction with a map satisfying \Hc{2} for which the convex cone generated by $W^+$ is all of $W$, so that $(W^+)^*=\{0\}$. Using this map, we construct an infeasible problem that nevertheless admits a bicompatible sequence satisfying the polarization constraint, contradicting \cite[Thm.~3]{PLG}. Ex.~\ref{ex:optimization} adapts this construction to a feasible optimization problem, disproving the first convergence assertion in \cite[Thm.~5]{PLG}.

For this construction and further results throughout the paper, we use the following family of projections onto the symmetric traceless matrices:
\begin{equation}\label{eq:PiD}
\Pi_D(X)=\frac{X+X^{\mathsf T}}{2}-\frac{\tr X}{D}\,I_D\qquad(D\ge2).
\end{equation}
The subscript $D$ denotes the matrix dimension. An unsubscripted $\Pi$ denotes a general polarization map.

\begin{example}[Counterexample to {\cite[Thm.~3]{PLG}}]\label{ex:feasibility}
Let $V=\R^3$, identify $V\otimes V$ with $\R^{3\times3}$ via $v\otimes w\mapsto vw^{\mathsf T}$, let $W$ be the space of symmetric traceless $3\times3$ matrices, and take $\Pi=\Pi_3$ from~\eqref{eq:PiD}.
If $\Pi_3(vv^{\mathsf T}+ww^{\mathsf T})=0$, then $vv^{\mathsf T}+ww^{\mathsf T}=cI_3$ with $c\ge0$; since the left side has rank at most $2$, $c=0$ and $v=w=0$. Thus $\Pi_3$ satisfies \Hc{2}. On the other hand,
\[
\sum_{i=1}^3\Pi_3(e_ie_i^{\mathsf T})=\Pi_3(I_3)=0,\qquad\Pi_3(e_ie_i^{\mathsf T})\ne0.
\]
In fact, the convex cone generated by $W^+$ is $W$: for $S\in W$ choose $t\ge0$ with $S+tI_3$ positive semidefinite and write $S+tI_3=\sum_i\lambda_iu_iu_i^{\mathsf T}$ with $\lambda_i\ge0$; then
\[
S=\Pi_3(S+tI_3)=\sum_i\Pi_3(\lambda_iu_iu_i^{\mathsf T}).
\]

Now let $K_A=\conv\{e_1,e_2,e_3\}\subset\R^3$, let $K_B=\{b_0\}$ be a singleton, and let $f(a,b)=a$. Every $a\in K_A$ has nonnegative coordinates summing to $1$, so $f(a,b)=a$ cannot vanish and $\Fcal=\varnothing$. Here $A(K_A)^*\cong\R^3$, with $\one_{K_A}(x)=\sum_ix_i$. Define the sequence $y_n=\frac13\sum_{i=1}^3e_i^{\otimes n}\otimes b_0^{\otimes n}$. This sequence chooses one of the three vertices uniformly and repeats the chosen vertex in every $A$-copy, with $b_0$ in every $B$-copy. Independent permutations leave each term unchanged, and marginalization gives the same construction at lower order. Thus $(y_n)$ is bicompatible, and
\[
(f\otimes f)\Sreg_2(y_2)=\frac13\sum_{i=1}^3e_ie_i^{\mathsf T}=\frac13I_3\in\ker\Pi_3.
\]
Hence \cite[Thm.~3]{PLG} fails.
\end{example}

We now turn the preceding construction into a feasible optimization problem for which the hierarchy fails to converge.

\begin{example}[Counterexample to {\cite[Thm.~5]{PLG}}]\label{ex:optimization}
We modify Ex.~\ref{ex:feasibility} by adding the origin to $K_A$ and introducing an objective, while retaining the polarization map $\Pi_3$. Let $K_B$ be a singleton, which we suppress, let $K_A=\conv\{0,e_1,e_2,e_3\}\subset\R^3$, and let $f(x)=x$ and $p(x)=-\sum_{i=1}^3x_i$. Then $\Fcal=\{0\}$ and $p^*=0$. We identify $A(K_A)^*$ with $\R\times\R^3$ and $x\in K_A$ with $(1,x)$; then $\one_{K_A}(t,x)=t$, $f(t,x)=x$, $p(t,x)=-\sum_ix_i$, and $K_A$ is the simplex with vertices $\sigma_0=(1,0)$ and $\sigma_i=(1,e_i)$, $i=1,2,3$. The additional coordinate records normalization; in particular, the newly added origin is represented by the normalized state $\sigma_0=(1,0)$.

Adding the origin creates a feasible point but leaves the three original vertices available. The same uniform mixture of their tensor powers continues to satisfy the polarization constraint, while each of these vertices has objective value $-1$. The sequence $y_n=\frac13\sum_{i=1}^3\sigma_i^{\otimes n}$ is bicompatible with
\[
(f\otimes f)\Sreg_2(y_2)=\frac13I_3\in\ker\Pi_3,
\]
so $y_n\in\Omega_n^{\Pi_3}$, and $p(R_{n,1}y_n)=-1$. Since $p\ge-1$ on $K_A$,
\begin{equation}\label{eq:badvalue}
p_n^{\Pi_3}=-1\quad(n\ge2),\qquad\lim_{n\to\infty}p_n^{\Pi_3}=-1<0=p^*.
\end{equation}
As~\eqref{eq:hierarchy} is \cite[Eq.~(28)]{PLG} and $\Pi_3$ satisfies \Hc{2}, this contradicts the first assertion of \cite[Thm.~5]{PLG}. For comparison, $p_n^{\id}=-1/n\to p^*$ (Sec.~\ref{sec:lp}). The example uses only the constraints in~\eqref{eq:hierarchy}; adding $f(R_{n,1}y)=0$ or~\eqref{eq:strong} defines a different relaxation.

Note that Ex.~\ref{ex:feasibility} also illustrates the insufficiency of \Hc{2} for feasibility problems: the polarization hierarchy remains feasible at every level, even though the original problem is infeasible.
\end{example}

\begin{remark}\label{rem:singleton}
The choice of a singleton $K_B$ simplifies Exs.~\ref{ex:feasibility} and~\ref{ex:optimization}, but both constructions extend to two nontrivial systems with coupled constraints.

In Ex.~\ref{ex:feasibility}, take $K_B=K_A$ and define
\[
f(a,b)=\sum_{i,j=1}^3a_ib_j\,e_{i+j},\qquad e_{k+3}=e_k.
\]
Since the entries of $f(a,b)$ sum to $1$, the original problem is infeasible. The sequence
\[
y_n=\frac13\sum_{i=1}^3e_i^{\otimes n}\otimes e_i^{\otimes n}
\]
is nevertheless bicompatible and satisfies the polarization constraint, because its second-moment operator is again $I_3/3\in\ker\Pi_3$.

\Needspace{6\baselineskip}
In Ex.~\ref{ex:optimization}, the corresponding modification is
\[
K_B=K_A,\qquad f(a,b)=(a_1b_1,a_2b_2,a_3b_3),\qquad p(a,b)=-\sum_{i=1}^3a_ib_i.
\]
Here $p^*=0$, while
\[
y_n=\frac13\sum_{i=1}^3\sigma_i^{\otimes n}\otimes\sigma_i^{\otimes n}
\]
remains feasible at every hierarchy level and gives $p_n^{\Pi_3}\le-1$.

Thus the failure occurs both for local affine constraints, as in the original examples, and for constraints coupling the two systems. The local case also connects to~\cite{ZKSP}, whose quantitative convergence results use a finite de Finetti theorem for constraints of the form $f(a,b)=f_A(a)\oplus f_B(b)$. Exs.~\ref{ex:feasibility} and~\ref{ex:optimization} show that this local structure alone does not make \Hc{2} sufficient.
\end{remark}

\section{The corrected condition}\label{sec:condition}
Let $D=\dim V\ge1$ (the case $V=\{0\}$ is trivial). For $k\ge1$ consider the condition
\begin{equation*}\label{eq:Hk}\tag{$\mathrm H_k$}
\Pi\Bigl(\sum_{j=1}^kv_j\otimes v_j\Bigr)=0\quad\Longrightarrow\quad v_1=\cdots=v_k=0\qquad(v_1,\dots,v_k\in V).
\end{equation*}
The corrected hypothesis is the case $k=D=\dim V$, denoted by \Hc{D}.
Since some $v_j$ may vanish, \Hc{k} implies \Hc{j} for $j\le k$.
The following proposition identifies the central algebraic condition underlying the corrected feasibility and convergence theorems and gives several equivalent formulations.

\begin{proposition}\label{prop:equiv}
For a linear map $\Pi:V\otimes V\to W$ the following are equivalent:
\begin{enumerate}
\item[\upshape(i)] $\Pi$ satisfies \Hc{D};
\item[\upshape(ii)] $\Pi$ satisfies \Hc{k} for every $k\ge1$;
\item[\upshape(iii)] $\ker\Pi\cap\PSD(V)=\{0\}$;
\item[\upshape(iv)] there are $\ell\in W^*$ and $c>0$ such that $\ell(\Pi(v\otimes v))\ge c\norm v^2$ for all $v\in V$;
\item[\upshape(v)] $\Pi$ satisfies \Hc{1}, and the closed convex cone generated by $W^+$ is pointed.
\end{enumerate}
\end{proposition}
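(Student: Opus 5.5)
We prove the chain (ii)$\Rightarrow$(i)$\Rightarrow$(iii)$\Rightarrow$(iv)$\Rightarrow$(v)$\Rightarrow$(iii)$\Rightarrow$(ii). The implication (ii)$\Rightarrow$(i) is immediate. For (i)$\Rightarrow$(iii), take $P\in\ker\Pi\cap\PSD(V)$ and use the spectral decomposition $P=\sum_{j=1}^{D}v_j\otimes v_j$ with $v_j=\sqrt{\lambda_j}u_j$, padding with zeros if $\rank P<D$. Then \Hc{D} forces all $v_j=0$, hence $P=0$. For (iii)$\Rightarrow$(ii), observe that $\sum_{j=1}^k v_j\otimes v_j$ is positive semidefinite for every $k$. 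If it lies in $\ker\Pi$, it is zero, and taking the trace gives $\sum_j\norm{v_j}^2=0$.

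The substantive step is (iii)$\Rightarrow$(iv), which is a separation argument. Let $T=\{P\in\PSD(V):\tr P=1\}$. This set is compact and convex, so its image $\Pi(T)\subset W$ is compact and convex, and by (iii) it does not contain $0$. Strict separation of the point $0$ from the compact convex set $\Pi(T)$ gives $\ell\in W^*$ and $c>0$ with $\ell(\Pi(P))\ge c$ for all $P\in T$. Applying this to $P=v\otimes v/\norm v^2$ yields $\ell(\Pi(v\otimes v))\ge c\norm v^2$; the case $v=0$ is trivial. This is the only place where a genuine existence argument is needed, and compactness of $T$ is the point to check. It holds because $\PSD(V)\cap\{\tr=1\}$ is closed and bounded, since the trace dominates the operator norm on $\PSD(V)$.

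For (iv)$\Rightarrow$(v), first note that \Hc{1} is immediate: $\Pi(v\otimes v)=0$ gives $0\ge c\norm v^2$. For pointedness, let $C$ be the closed convex cone generated by $W^+$; then $\ell\ge0$ on $C$. If $w$ and $-w$ both lie in $C$, write $w=\lim_m\sum_j\Pi(v_{j,m}\otimes v_{j,m})$. Since $\ell(w)\ge0$ and $\ell(-w)\ge0$, we get $\ell(w)=0$. Then the PSD operators $P_m=\sum_j v_{j,m}\otimes v_{j,m}$ satisfy $c\tr P_m\le\ell(\Pi(P_m))\to0$, so $P_m\to0$ and hence $w=\lim\Pi(P_m)=0$. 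The clean way to organise this is to note that $\Pi(\PSD(V))$ is exactly the convex cone generated by $W^+$. It is already closed, because under (iv) a sequence $\Pi(P_m)$ converging in $W$ has $\tr P_m$ bounded, so a subsequence of $P_m$ converges.

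For (v)$\Rightarrow$(iii), let $P\in\ker\Pi\cap\PSD(V)$ with decomposition $P=\sum_j v_j\otimes v_j$. Then $\sum_j\Pi(v_j\otimes v_j)=0$. Each term $\Pi(v_j\otimes v_j)$ lies in $C$, and its negative $-\Pi(v_j\otimes v_j)=\sum_{i\ne j}\Pi(v_i\otimes v_i)$ also lies in $C$. Pointedness gives $\Pi(v_j\otimes v_j)=0$ for each $j$, and \Hc{1} then gives $v_j=0$. Hence $P=0$.
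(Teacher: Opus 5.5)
Your proof is correct and follows essentially the same route as the paper, including the same spectral-decomposition, trace, compact-image and trace-bound arguments. The differences are cosmetic: you cite strict separation where the paper constructs $\ell$ explicitly from a minimal-norm point of $\Pi(T)$, you argue (v)$\Rightarrow$(iii) directly for each term rather than by contradiction, and you omit the paper's redundant direct step (iv)$\Rightarrow$(ii).
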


\begin{proof}
\setlength{\abovedisplayskip}{6pt plus 1pt minus 1pt}
\setlength{\belowdisplayskip}{6pt plus 1pt minus 1pt}
\setlength{\abovedisplayshortskip}{6pt plus 1pt minus 1pt}
\setlength{\belowdisplayshortskip}{6pt plus 1pt minus 1pt}
(i)$\Rightarrow$(iii): Let $M\in\ker\Pi\cap\PSD(V)$. By the spectral theorem, write $M=\sum_{j=1}^{D}\lambda_j u_j\otimes u_j$, where $\lambda_j\ge0$, and set $v_j=\sqrt{\lambda_j}\,u_j$. Then
\[
\Pi\Bigl(\sum_{j=1}^{D}v_j\otimes v_j\Bigr)=\Pi(M)=0,
\]
so \Hc{D} implies $v_1=\cdots=v_D=0$, and hence $M=0$.

\smallskip\noindent
(iii)$\Rightarrow$(ii): Fix an arbitrary $k\ge1$ and vectors $v_1,\ldots,v_k\in V$ such that
\[
\Pi(M)=0,\qquad M=\sum_{j=1}^{k}v_j\otimes v_j.
\]
Each outer product $v_j\otimes v_j$ is positive semidefinite, so their sum $M$ is positive semidefinite as well. Since $M$ also belongs to $\ker\Pi$, condition (iii) gives $M=0$. To conclude that the individual vectors vanish, take the trace:
\[
0=\tr M=\sum_{j=1}^{k}\norm{v_j}^2.
\]
This is a sum of nonnegative numbers, so every $v_j=0$. Since $k$ was arbitrary, $\Pi$ satisfies \Hc{k} for every $k\ge1$.

\smallskip\noindent
(ii)$\Rightarrow$(i) is trivial.

\smallskip\noindent
(iii)$\Rightarrow$(iv): The positive semidefinite operators of trace one form a compact convex set. Since $\Pi$ is linear, its image
\[
Q=\Pi\bigl(\{M\in\PSD(V):\tr M=1\}\bigr)
\]
is also compact and convex. Moreover, $0\notin Q$ by (iii).

Fix an inner product on $W$. Compactness gives a point $q_0\in Q$ of minimal norm, and $q_0\ne0$. For every $q\in Q$, convexity gives $q_0+t(q-q_0)\in Q$ for $0\le t\le1$. Minimality of $q_0$ therefore implies
\[
0\le\left.\frac{d}{dt}\right|_{t=0^+}\norm{q_0+t(q-q_0)}^2
=2\langle q_0,q-q_0\rangle.
\]
Thus the linear functional $\ell(w)=\langle q_0,w\rangle$ satisfies
\[
\ell(q)\ge\norm{q_0}^2=:c>0\qquad(q\in Q).
\]
For $v\ne0$, the operator $(v\otimes v)/\norm v^2$ is positive semidefinite with trace one, so
\[
\ell(\Pi(v\otimes v))\ge c\norm v^2.
\]
For $v=0$ this is immediate, completing the proof.

\smallskip\noindent
(iv)$\Rightarrow$(ii): Fix $k\ge1$ and suppose that
\[
\Pi\Bigl(\sum_{j=1}^{k}v_j\otimes v_j\Bigr)=0.
\]
By linearity of $\ell$ and $\Pi$, condition (iv) gives
\[
0=\sum_{j=1}^{k}\ell\bigl(\Pi(v_j\otimes v_j)\bigr)\ge c\sum_{j=1}^{k}\norm{v_j}^2.
\]
Since $c>0$ and each squared norm is nonnegative, every $v_j=0$. Thus \Hc{k} holds for every $k\ge1$.

\smallskip\noindent
(iv)$\Rightarrow$(v): Condition (iv) immediately implies \Hc{1}. Let
\[
C:=\overline{\operatorname{cone}(W^+)}
=\overline{\Pi(\PSD(V))}.
\]
We show that $\ell(w)>0$ for every nonzero $w\in C$. Summing (iv) over a spectral decomposition gives
\[
\ell(\Pi(M))\ge c\,\tr M
\qquad(M\in\PSD(V)),
\]
so $\ell\ge0$ on $C$. If $w\in C$ satisfies $\ell(w)=0$, choose $M_k\succeq0$ with $\Pi(M_k)\to w$. Then
\[
0\le\tr M_k
\le c^{-1}\ell(\Pi(M_k))\longrightarrow0.
\]
Positivity gives $\norm{M_k}_{\mathrm{op}}\le\tr M_k$, so $M_k\to0$ and $w=0$. Thus $\ell$ is strictly positive on $C\setminus\{0\}$. Since $\ell(-w)=-\ell(w)$, the cone cannot contain both $w$ and $-w$ for any nonzero $w$. Hence $C$ is pointed.

\smallskip\noindent
(v)$\Rightarrow$(iii): if $0\ne M=\sum_jv_j\otimes v_j\in\ker\Pi\cap\PSD(V)$ with all $v_j\ne0$, then $w=\Pi(v_1\otimes v_1)\ne0$ by \Hc{1}, while $-w=\Pi(\sum_{j\ge2}v_j\otimes v_j)$ also lies in the cone, contradicting pointedness.
\end{proof}

\begin{samepage}
\begin{remark}[Interpretation]\label{rem:interpretation}
The equivalent conditions in Prop.~\ref{prop:equiv} ensure that images of nonzero positive semidefinite outer products cannot cancel to zero. Condition (iv) gives a scalar interpretation of this property: there is a single linear functional $\ell$ whose value on the polarized output controls the size of every positive semidefinite input. Indeed,
\[
\ell(\Pi(M))\ge c\,\tr M\qquad\text{for every }M\in\PSD(V).
\]
The same functional works for all such $M$, with a uniform constant $c>0$. For $\Pi=\id$, this is simply the ordinary trace, with $c=1$. For the second-moment operator arising from a de Finetti representation, $\tr M$ is the average squared constraint violation. The inequality therefore explains why the polarization constraint cannot conceal a nonzero average squared violation.

Condition (v) gives a complementary geometric interpretation. A cone is pointed if it contains no nonzero vector together with its negative, as illustrated by the nonnegative quadrant in $\R^2$. Here, \Hc{1} ensures that no nonzero outer product is mapped to zero, while pointedness excludes cancellation among nonzero elements of the closed convex cone generated by $W^+$. This includes elements obtained by positive sums and limits. The functional $\ell$ in (iv) makes this geometry explicit: it is strictly positive on every nonzero element of this cone, so two opposite nonzero vectors cannot both belong to it.
\end{remark}
\end{samepage}

The next proposition shows that, without further assumptions on $\Pi$, the corrected condition \Hc{D} cannot be replaced by \Hc{k} for any $k<D$.

\begin{proposition}\label{prop:sharp}
Let $V=\R^D$ with $D\ge2$. The map $\Pi_D$ defined in~\eqref{eq:PiD} satisfies \Hc{D-1} but not \Hc{D}.
\end{proposition}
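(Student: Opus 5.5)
We prove the two halves separately, using the identification of $V\otimes V$ with $\R^{D\times D}$ via $v\otimes w\mapsto vw^{\mathsf T}$ from Ex.~\ref{ex:feasibility}. The first step is to compute the kernel of $\Pi_D$ on the relevant inputs. For $M=\sum_{j}v_jv_j^{\mathsf T}$ the matrix $M$ is symmetric, so $\Pi_D(M)=M-\frac{\tr M}{D}I_D$. Hence $\Pi_D(M)=0$ holds if and only if $M=cI_D$ with $c=\tr M/D$. Since $M$ is positive semidefinite, $c\ge0$.

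\emph{Failure of \Hc{D}.} This reduces to a direct computation, generalizing Ex.~\ref{ex:feasibility}. Take $v_j=e_j$ for $j=1,\dots,D$. Then $\sum_j e_je_j^{\mathsf T}=I_D$ and $\Pi_D(I_D)=I_D-\frac{D}{D}I_D=0$, while no $e_j$ vanishes. Equivalently, $I_D\in\ker\Pi_D\cap\PSD(\R^D)$ is nonzero, so condition (iii) of Prop.~\ref{prop:equiv} fails.

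\emph{Validity of \Hc{D-1}.} Suppose $\Pi_D\bigl(\sum_{j=1}^{D-1}v_jv_j^{\mathsf T}\bigr)=0$. By the first step, $M=\sum_{j=1}^{D-1}v_jv_j^{\mathsf T}=cI_D$ with $c\ge0$. Since $M$ is a sum of $D-1$ rank-one matrices, $\rank M\le D-1<D$. If $c>0$ we would have $\rank M=\rank(cI_D)=D$, a contradiction, so $c=0$ and $M=0$. Taking the trace gives $0=\tr M=\sum_j\norm{v_j}^2$, hence every $v_j=0$, exactly as in the proof of (iii)$\Rightarrow$(ii) in Prop.~\ref{prop:equiv}.

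There is no real obstacle here. The only point requiring care is to use the symmetry of $M$, so that the symmetrization in $\Pi_D$ acts trivially and the kernel condition reduces to $M$ being a scalar multiple of the identity; once that is settled, the rank bound finishes the argument. For $D=2$ the statement specializes to $\Pi_2$ satisfying \Hc{1} but not \Hc{2}, and the same argument covers this case.
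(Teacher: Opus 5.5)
Your proposal is correct and follows essentially the same route as the paper: symmetry of $M$ reduces $\Pi_D(M)=0$ to $M=cI_D$ with $c\ge0$, the rank bound $\rank M\le D-1$ forces $c=0$, and the trace gives $v_j=0$. The failure of \Hc{D} comes from the same witness, the standard basis with $\Pi_D(I_D)=0$.
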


\begin{proof}
Let
\[
M=\sum_{j=1}^{D-1}v_jv_j^{\mathsf T}
\]
and suppose that $\Pi_D(M)=0$. Since $M$ is symmetric, the definition of $\Pi_D$ gives $M=cI_D$, where $c=\tr M/D\ge0$. The range of $M$ is contained in $\operatorname{span}\{v_1,\ldots,v_{D-1}\}$, so $\rank M\le D-1$. Since $cI_D$ has rank $D$ whenever $c>0$, we obtain $c=0$. Taking the trace yields
\[
0=\tr M=\sum_{j=1}^{D-1}\norm{v_j}^2,
\]
so every $v_j=0$. Thus $\Pi_D$ satisfies \Hc{D-1}. On the other hand, the standard basis vectors satisfy
\[
\Pi_D\Bigl(\sum_{j=1}^{D}e_je_j^{\mathsf T}\Bigr)=\Pi_D(I_D)=0.
\]
Since these vectors are nonzero, \Hc{D} fails.
\end{proof}

Thus $D$ is the smallest $k$ such that \Hc{k} implies the conditions of Prop.~\ref{prop:equiv} for every $\Pi$. Carath\'eodory's theorem for cones in $\Sym^2(V)$ gives only $k=D(D+1)/2$; the value $D$ is the Carath\'eodory number of $\PSD(V)$~\cite{IL}. If $V$ is the space of real $d\times d$ matrices, as in \cite[p.~21]{PLG}, or of Hermitian $d\times d$ matrices, then $D=d^2$. The vectors $v_j$ in \Hc{k} range over $V$ and need not lie in the image of $f$.

\Needspace{11\baselineskip}
\begin{samepage}
The next proposition shows that the original two-vector condition suffices when the restriction of $\Pi$ to symmetric tensors has rank at most four, and gives a more general bound for higher ranks.

\begin{proposition}\label{prop:rank}
Let $m=\rank(\Pi|_{\Sym^2(V)})$ and
\begin{equation}\label{eq:kappa}
\kappa=\min\Bigl\{D,\Bigl\lfloor\frac{\sqrt{8m+9}-1}{2}\Bigr\rfloor\Bigr\}.
\end{equation}
Then \Hc{\kappa} implies the conditions of Prop.~\ref{prop:equiv}. In particular, \Hc{2} suffices if $m\le4$.
\end{proposition}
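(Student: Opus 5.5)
Assume \Hc{\kappa} and, for contradiction, suppose condition (iii) of Prop.~\ref{prop:equiv} fails. Then there is a nonzero $M\in\ker\Pi\cap\PSD(V)$, and $\rank M\le D$. The plan is to replace $M$ by a nonzero element of the same set whose rank is at most $\kappa$. Its spectral decomposition then gives at most $\kappa$ nonzero vectors $v_j$ with $\Pi(\sum_j v_j\otimes v_j)=0$, which contradicts \Hc{\kappa}.

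Consider the set $P=\ker(\Pi|_{\Sym^2(V)})\cap\PSD(V)$. It is a closed convex cone. By (iii)$\Leftarrow$(i) applied in reverse, it is nonzero by assumption. By the trace argument it is pointed, since a nonzero PSD operator has positive trace. Intersecting with $\{\tr M=1\}$ gives a nonempty compact convex set, so it has an extreme point. Let $M$ be such an extreme point, with rank $r$ and range $U$, so $\dim U=r$.

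The key step is a Barvinok--Pataki-type rank bound. Any symmetric $X$ supported on $U$, with $\Pi(X)=0$ and $\tr X=0$, satisfies $M\pm\varepsilon X\in P\cap\{\tr=1\}$ for small $\varepsilon$. By extremality, no nonzero such $X$ exists. The space of symmetric operators on $U$ has dimension $r(r+1)/2$. The constraints $\Pi(X)=0$ impose at most $m$ linear conditions, since $\Pi$ restricted to $\Sym^2(V)$ has rank $m$, and $\tr X=0$ imposes one more. Hence $r(r+1)/2\le m+1$, that is, $r^2+r-2(m+1)\le0$, which gives
\[
r\le\frac{-1+\sqrt{8m+9}}{2}.
\]
Together with $r\le D$, this yields $r\le\kappa$.

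The main obstacle is getting the count right: the trace normalization adds a constraint, which produces the $+9$ under the root rather than $+1$. One could also drop the trace constraint by using extreme rays of the cone $P$ directly. In that version the perturbation $M+\varepsilon X$ must not be a multiple of $M$, and the dimension count becomes $r(r+1)/2-1\le m$, which gives the same bound. Writing $M=\sum_{j=1}^r v_j\otimes v_j$ with nonzero $v_j$ and padding with zero vectors up to $\kappa$ gives the contradiction with \Hc{\kappa}, so (iii) holds.

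For the last claim, take $m\le4$. Then $\sqrt{8m+9}\le\sqrt{41}<7$, so $\kappa\le2$. Since \Hc{2} implies \Hc{\kappa} for $\kappa\le2$, \Hc{2} suffices.
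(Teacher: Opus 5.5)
Your proof is correct and follows the paper's argument essentially step for step. You take an extreme point of the trace-one slice of $\ker\Pi\cap\PSD(V)$, use the perturbation and dimension count $r(r+1)/2\le m+1$ (Pataki's rank bound) to get $r\le\kappa$, and pad the spectral decomposition with zero vectors to violate \Hc{\kappa}; the phrase ``by (iii)$\Leftarrow$(i) applied in reverse'' is unnecessary, since nonemptiness follows directly from assuming that (iii) fails.
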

\end{samepage}

\begin{proof}
We prove the contrapositive: if (iii) of Prop.~\ref{prop:equiv} fails, then \Hc{\kappa} fails. Normalizing a nonzero positive semidefinite operator in $\ker\Pi$ by its trace shows that
\[
C=\{M\in\PSD(V):\Pi(M)=0,\ \tr M=1\}
\]
is nonempty. This set is compact and convex, so it has an extreme point $M$. We show that $\rank M\le\kappa$.

Let $U=\ran M$ and $r=\dim U$. The self-adjoint operators $H$ with $\ran H\subseteq U$ correspond to real symmetric $r\times r$ matrices and therefore form a space of dimension $r(r+1)/2$. On this space, $\Pi(H)=0$ imposes at most $m$ independent linear constraints, and $\tr H=0$ adds at most one. Consequently, if $r(r+1)/2>m+1$, the rank--nullity theorem gives a nonzero such $H$ with $\Pi(H)=0$ and $\tr H=0$.

Both $M$ and $H$ vanish on $U^\perp$, while $M|_U$ is positive definite. Thus, for sufficiently small $\varepsilon>0$, the restrictions $(M\pm\varepsilon H)|_U$ remain positive definite, and $M\pm\varepsilon H$ remain positive semidefinite on $V$. Moreover, $\Pi(M\pm\varepsilon H)=0$ and $\tr(M\pm\varepsilon H)=1$, so both perturbations belong to $C$. Since $H\ne0$, the identity
\[
M=\frac12(M+\varepsilon H)+\frac12(M-\varepsilon H)
\]
expresses $M$ as the midpoint of two distinct elements of $C$, contradicting extremality. Hence
\[
\frac{r(r+1)}2\le m+1,\qquad\text{that is,}\qquad r\le\frac{\sqrt{8m+9}-1}{2}.
\]
This is the rank bound of~\cite{Pataki}. Since $r$ is an integer and $r\le D$, we obtain $r\le\kappa$. A spectral decomposition $M=\sum_{j=1}^rv_j\otimes v_j$ with $v_j\ne0$ violates \Hc{r}. Padding this decomposition with $\kappa-r$ zero vectors gives a violation of \Hc{\kappa}, proving the contrapositive.

Finally, if $m\le4$, the bound $r(r+1)/2\le5$ forces $r\le2$, so any failure of (iii) already violates \Hc{2}.
\end{proof}

For $\Pi_3$ from the family~\eqref{eq:PiD}, $D=3$ and $m=5$, so $\kappa=3$ and Prop.~\ref{prop:rank} is consistent with Ex.~\ref{ex:feasibility}.

Whenever $\Pi$ fails \Hc{D}, the following construction produces an infeasible problem with a bicompatible sequence satisfying the polarization constraint.

\begin{proposition}\label{prop:necessary}
If $\ker\Pi\cap\PSD(V)\ne\{0\}$, there are state spaces $K_A,K_B$ and a map $f$, affine in each argument, with $\Fcal=\varnothing$ and a bicompatible sequence $(y_n)$ with $\Pi((f\otimes f)\Sreg_2(y_2))=0$.
\end{proposition}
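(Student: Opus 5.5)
The plan is to generalize Ex.~\ref{ex:feasibility}. Take a nonzero $M\in\ker\Pi\cap\PSD(V)$ and write its spectral decomposition
\[
M=\sum_{j=1}^{r}v_j\otimes v_j,\qquad v_j\ne0,\quad r=\rank M\ge1 .
\]
Rescaling by a positive constant, we may assume this form directly; rescaling keeps $M$ in the kernel. The idea is to build a problem whose constraint residuals are exactly the vectors $\sqrt r\,v_j$, attained at $r$ distinct points, and whose feasible set is empty. A uniform mixture of tensor powers at those points then has second-moment operator $M$, which lies in $\ker\Pi$.

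Concretely, let $K_B=\{b_0\}$ be a singleton, so that $A(K_B)^*\cong\R$. Let $K_A=\conv\{\delta_1,\dots,\delta_r\}$ be the standard simplex in $\R^r$; its affine functions are the linear functionals on $\R^r$, so $A(K_A)^*\cong\R^r$. Define $f(a,b_0)=\sqrt r\sum_{j}a_jv_j$. This map is affine in $a$, trivially affine in $b$, and linear on $E$.

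For feasibility I must ensure that $f$ never vanishes on $K_A$, i.e.\ $0\notin\conv\{v_1,\dots,v_r\}$. The orthogonal eigenvectors of $M$ are linearly independent, and the $v_j$ are nonzero multiples of them. A convex combination $\sum_ja_jv_j$ with $\sum_j a_j=1$ has some $a_j\ne0$, so it cannot vanish. Hence $\Fcal=\varnothing$. This step is easy only because I chose the spectral decomposition. An arbitrary decomposition, for example $I_3$ written as a sum of four rank-one terms, could put $0$ in the convex hull, and that is the one point needing care.

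Next set $y_n=\frac1r\sum_{j=1}^r\delta_j^{\otimes n}\otimes b_0^{\otimes n}$. As in Ex.~\ref{ex:feasibility}, each summand is invariant under $S_\sigma^A\otimes S_\tau^B$. Applying the unit effects, which evaluate to $1$ on each $\delta_j$ and on $b_0$, yields $R_{n,k}y_n=y_k$. So $(y_n)$ is bicompatible, with representing measure uniform on $\{(\delta_j,b_0)\}$. Then
\[
(f\otimes f)\Sreg_2(y_2)=\frac1r\sum_j f(\delta_j,b_0)\otimes f(\delta_j,b_0)=\frac1r\sum_j r\,v_j\otimes v_j=M,
\]
and $\Pi(M)=0$. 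The main thing to verify carefully is this identification of $f\otimes f$ applied to $\Sreg_2$ of product states with $f(a,b)\otimes f(a,b)$, which holds by the convention in Sec.~\ref{sec:prelim}. Finally, I would remark that a nontrivial $K_B$ can be obtained by the coupling trick of Rem.~\ref{rem:singleton}, taking $K_B=K_A$ and the diagonal sequence, though this is not needed for the statement.
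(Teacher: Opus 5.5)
Your proposal is correct and matches the paper's proof essentially step for step. The shared steps are:
\begin{itemize}
\item take a nonzero $M\in\ker\Pi\cap\PSD(V)$ and write its spectral decomposition;
\item let $K_A$ be a simplex on $r=\rank M$ vertices and $K_B=\{b_0\}$ a singleton, with $f(\sigma_j,b_0)=\sqrt r\,v_j$;
\item deduce $\Fcal=\varnothing$ from the linear independence (orthogonality) of the $v_j$;
\item take the uniform mixture of tensor powers of the vertices, whose second-moment operator is exactly $M$.
\end{itemize}
The only cosmetic difference is your ``rescaling'' remark, which is unnecessary: setting $v_j=\sqrt{\lambda_j}\,u_j$ already gives $M=\sum_j v_j\otimes v_j$.
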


\begin{proof}
Let $0\ne M\in\ker\Pi\cap\PSD(V)$ and write $M=\sum_{i=1}^r\lambda_iu_i\otimes u_i$ with $\lambda_i>0$ and orthonormal $u_i$. Let $K_A$ be a simplex with vertices $\sigma_1,\dots,\sigma_r$, let $K_B=\{b_0\}$ be a singleton, and let $f:K_A\times K_B\to V$ be the affine map determined by $f(\sigma_i,b_0)=\sqrt{r\lambda_i}\,u_i$. The factor $\sqrt r$ compensates for the uniform weights $1/r$ in the sequence below, so that its second-moment operator equals $M$.

Every $a\in K_A$ has barycentric coordinates $t_i\ge0$ with $\sum_i t_i=1$, so
\[
f(a,b_0)=\sum_{i=1}^r t_i f(\sigma_i,b_0)=\sum_{i=1}^r t_i\sqrt{r\lambda_i}\,u_i.
\]
If $f(a,b_0)=0$, orthonormality forces every $t_i=0$, contradicting $\sum_i t_i=1$. Thus $f$ has no zero on $K_A\times K_B$, so $\Fcal=\varnothing$.

Now define
\[
y_n=\frac1r\sum_{i=1}^r\sigma_i^{\otimes n}\otimes b_0^{\otimes n}.
\]
Each summand is invariant under independent permutations of the $A$- and $B$-copies, and marginalization preserves the uniform mixture, so $(y_n)$ is bicompatible. Moreover,
\[
\begin{aligned}
(f\otimes f)\Sreg_2(y_2)
&=\frac1r\sum_{i=1}^r f(\sigma_i,b_0)\otimes f(\sigma_i,b_0)\\
&=\sum_{i=1}^r\lambda_iu_i\otimes u_i=M.
\end{aligned}
\]
Since $M\in\ker\Pi$, the polarization constraint is satisfied:
\[
\Pi\bigl((f\otimes f)\Sreg_2(y_2)\bigr)=\Pi(M)=0.\qedhere
\]
\end{proof}

Hence condition \Hc{D} is necessary for \cite[Thm.~3]{PLG} to hold for all state spaces and all maps $f$. It need not be necessary for a fixed problem.

\section{The corrected theorems}\label{sec:theorems}
We now show that replacing \Hc{2} by \Hc{D} restores the characterization of feasibility in terms of bicompatible sequences satisfying the polarization constraint.

\begin{theorem}[Correction of {\cite[Thm.~3]{PLG}}]\label{thm:polar}
Let $\Pi$ satisfy \Hc{D}. Then $\Fcal\ne\varnothing$ if and only if there is a bicompatible sequence $(y_n)$ with
\begin{equation}\label{eq:constraint}
\Pi\bigl((f\otimes f)\Sreg_2(y_2)\bigr)=0.
\end{equation}
Every representing measure $\mu$ of such a sequence satisfies $\supp\mu\subseteq\Fcal$.
\end{theorem}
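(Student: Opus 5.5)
The forward direction is immediate from the discussion preceding \eqref{eq:upper}. If $(a,b)\in\Fcal$, the product sequence $y_n=a^{\otimes n}\otimes b^{\otimes n}$ is bicompatible. It satisfies $(f\otimes f)\Sreg_2(y_2)=f(a,b)\otimes f(a,b)=0$, so \eqref{eq:constraint} holds for any linear $\Pi$. No hypothesis on $\Pi$ is needed here.

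For the converse and the support statement, I would start from an arbitrary bicompatible sequence satisfying \eqref{eq:constraint} and fix a representing measure $\mu$ from \cite[Lem.~2]{PLG}. Inserting \eqref{eq:definetti} with $n=2$ gives \eqref{eq:moment}: $\Pi(M)=0$ for the second-moment operator $M=\int f(a,b)\otimes f(a,b)\,d\mu$. This step uses that $(f\otimes f)\Sreg_2$ is linear, so it commutes with the integral, and that $\Sreg_2(a^{\otimes2}\otimes b^{\otimes2})=(a\otimes b)^{\otimes2}$. Since $f$ is continuous on the compact set $K_A\times K_B$, the integral is a well-defined Bochner integral. For each $z$ we have $\langle z,Mz\rangle=\int\langle z,f\rangle^2\,d\mu\ge0$, so $M\in\PSD(V)$. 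By Prop.~\ref{prop:equiv}, \Hc{D} is equivalent to (iii), and therefore $M=0$.

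Taking the trace then gives $\int\norm{f(a,b)}^2\,d\mu=0$, so $f=0$ $\mu$-almost everywhere. The main technical point is upgrading this to $\supp\mu\subseteq\Fcal$. The function $(a,b)\mapsto\norm{f(a,b)}^2$ is continuous, since $f$ is bi-affine on finite-dimensional compact convex sets and hence continuous. Its zero set $\Fcal$ is therefore closed, and the complement $U=(K_A\times K_B)\setminus\Fcal$ is open with $\mu(U)=0$. By definition of the support, $\supp\mu$ is disjoint from every open null set, which gives $\supp\mu\subseteq\Fcal$. Finally, $\mu$ is a probability measure, so $\supp\mu\neq\varnothing$ on the compact metrizable space $K_A\times K_B$. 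Hence $\Fcal\ne\varnothing$.

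The only genuine obstacle is the implication $\Pi(M)=0\Rightarrow M=0$ for a positive semidefinite $M$ whose rank may exceed two. This is exactly where \Hc{2} fails, as Ex.~\ref{ex:feasibility} shows, and it is handled by invoking Prop.~\ref{prop:equiv}(i)$\Rightarrow$(iii). The remaining points to check are routine: continuity of $f$, and the fact that the statement covers every representing measure. The argument above applies verbatim to any $\mu$ satisfying \eqref{eq:definetti}, so it covers every such measure.
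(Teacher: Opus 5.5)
Your proposal is correct and follows the paper's proof essentially step for step: the product sequence gives the forward direction, and for the converse you use the de Finetti representation, positivity of the second-moment operator $M$, Prop.~\ref{prop:equiv}(iii) to get $M=0$, the trace to get $f=0$ $\mu$-almost everywhere, and closedness of $\Fcal$ for the support inclusion. The only cosmetic difference is that the paper concludes $\Fcal\ne\varnothing$ directly from $\mu(\Fcal)=1$, whereas you conclude it from $\supp\mu\ne\varnothing$.
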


\begin{proof}
If $(a,b)\in\Fcal$, the sequence $y_n=a^{\otimes n}\otimes b^{\otimes n}$ is bicompatible and satisfies
\[
(f\otimes f)\Sreg_2(y_2)=f(a,b)\otimes f(a,b)=0.
\]
Hence the polarization constraint holds.

Conversely, let $(y_n)$ be a bicompatible sequence satisfying~\eqref{eq:constraint}, and let $\mu$ be any representing measure, whose existence follows from the de Finetti theorem. Define
\[
M:=(f\otimes f)\Sreg_2(y_2)
=\int f(a,b)\otimes f(a,b)\,d\mu(a,b).
\]
For every $z\in V$,
\[
\langle z,Mz\rangle=\int\langle z,f(a,b)\rangle^2\,d\mu(a,b)\ge0,
\]
so $M$ is positive semidefinite. The polarization constraint gives $\Pi(M)=0$, and \Hc{D}, through Prop.~\ref{prop:equiv}(iii), therefore implies $M=0$. Taking the trace yields
\[
0=\tr M=\int\norm{f(a,b)}^2\,d\mu(a,b).
\]
Since the integrand is nonnegative, $f=0$ $\mu$-almost everywhere, so $\mu(\Fcal)=1$. Continuity of $f$ makes $\Fcal$ closed, hence $\supp\mu\subseteq\Fcal$. In particular, $\Fcal\ne\varnothing$. As $\mu$ was arbitrary, the support inclusion holds for every representing measure.
\end{proof}

\begin{remark}\label{rem:maps}
The polarization maps proposed in \cite[p.~21]{PLG} already satisfy the corrected hypothesis \Hc{D}. This can be verified using the scalar criterion in Prop.~\ref{prop:equiv}(iv), with $c=1$: for $\Pi=\id_{V\otimes V}$ take $\ell(v\otimes w)=\langle v,w\rangle$; for $V=\R^{d\times d}$ with the Frobenius inner product and the maps
\[
\begin{aligned}
\Pi_{\rm HS}(A\otimes B)&=\tr(A^{\mathsf T}B)\in\R,\\
\Pi_{\rm M}(A\otimes B)&=A^{\mathsf T}B\in\R^{d\times d},
\end{aligned}
\]
take $\ell=\id_\R$ and $\ell=\tr$, respectively. In each case $\ell(\Pi(v\otimes v))=\norm v^2$. The codomain of $\Pi_{\rm M}$ is $\R^{d\times d}$, since $A^{\mathsf T}B$ need not be symmetric.

These functionals also give a direct replacement for the dual-cone argument in~\cite{PLG}. Indeed, with $M$ and $\mu$ as in the proof of Thm.~\ref{thm:polar},
\[
0=\ell(\Pi(M))=\int\ell\bigl(\Pi(f(a,b)\otimes f(a,b))\bigr)\,d\mu(a,b)\ge c\int\norm{f(a,b)}^2\,d\mu(a,b),
\]
so $f=0$ $\mu$-almost everywhere.
\end{remark}

Thm.~\ref{thm:polar} assumes a full infinite bicompatible sequence. To prove convergence of the hierarchy, we must first construct such a sequence from minimizers at finite levels. This requires a compactness argument in addition to the corrected polarization condition \Hc{D}.

The proof of \cite[Thm.~5]{PLG}, pp.~24--25, takes a limit of the two-copy marginals only.\footnote{In the final line of this proof, we interpret $p(x_2^\star)$ as $p(x_1^\star)$, where $x_1^\star$ denotes the corresponding one-copy marginal, consistently with \cite[Eq.~(28)]{PLG}.} The extension property used there guarantees symmetry under permutations of the pairs $A_iB_i$. Bicompatibility, however, requires independent permutations of the $A$- and $B$-copies at every order, together with compatibility under marginalization. Ex.~\ref{ex:parity} shows that pair-symmetric extensions and independent symmetry at order two do not suffice.

Thm.~\ref{thm:convergence} closes this gap by extracting limits of all fixed-order marginals along a common subsequence. These limits form a bicompatible sequence, to which de Finetti applies. Although the finite-level operators need not be positive semidefinite, the resulting second-moment operator is. Condition \Hc{D} then forces it to vanish, so the representing measure is supported on feasible pairs. This proves asymptotic convergence of the optimal values without requiring any finite relaxation to be exact.

\begin{example}\label{ex:parity}
Let $K_A=K_B$ be the one-dimensional probability simplex on $\{0,1\}$, the state space of a classical bit. Then $K_A\tmax K_B$ is the three-dimensional probability simplex on $\{0,1\}^2$. Let $\rho_+=\frac12(\delta_{00}+\delta_{11})$ and $\rho_-=\frac12(\delta_{01}+\delta_{10})$, where $\delta_{ab}$ is the deterministic distribution assigning probability $1$ to $(a,b)$. Under $\rho_+$, the two bits are uniformly distributed and always agree; under $\rho_-$, they are uniformly distributed and always disagree. Define the two-copy state
\begin{equation}\label{eq:parity}
x_2=\tfrac12\rho_+^{\otimes2}+\tfrac12\rho_-^{\otimes2}.
\end{equation}
A bit string has even or odd parity according to whether its number of ones is even or odd. The mixture chooses with equal probability whether both pairs agree or both pairs disagree. In either case, the total number of ones is even, and each of the eight strings with this property has probability $1/8$. Thus $\Sreg_2^{-1}x_2$ is uniform on the even-parity strings $(a_1,a_2,b_1,b_2)\in\{0,1\}^4$.\footnote{Here $\Sreg_2^{-1}$ reorders the tensor factors from $A_1B_1A_2B_2$ to $A_1A_2B_1B_2$.} Swapping the $A$-copies or the $B$-copies preserves parity, so this distribution is invariant under these independent permutations. Moreover, $x_2\in\Ext_\infty(K_A\tmax K_B,2)$, being the two-copy marginal of $\frac12\rho_+^{\otimes n}+\frac12\rho_-^{\otimes n}$ for every $n$.

We now show that, despite these properties, $x_2$ cannot be the two-copy marginal of a full bicompatible sequence. Suppose that $\Sreg_2(y_2)=x_2$ for a bicompatible sequence $(y_n)$ with representing measure $\mu$, and write $a(0)$ for the probability of the outcome $0$ under $a\in K_A$. The $A_1A_2$-marginal of $y_2$ is uniform because each choice of the two $A$-bits has exactly two even-parity completions by the $B$-bits. Hence $\int a(0)\,d\mu=\frac12$ and $\int a(0)^2\,d\mu=\frac14$. Consequently,
\[
\int\bigl(a(0)-\tfrac12\bigr)^2\,d\mu
=\int a(0)^2\,d\mu-\int a(0)\,d\mu+\tfrac14
=\tfrac14-\tfrac12+\tfrac14=0.
\]
This vanishing variance forces the local probability $a(0)$ to equal $1/2$ almost everywhere; similarly, $b(0)=1/2$ almost everywhere. Hence the representing measure is concentrated on the single pair of uniform local distributions. The de Finetti representation would therefore make all four bits independent and uniform, assigning probability $1/16$ to every string. This contradicts the absence of odd-parity strings in $x_2$. Sec.~\ref{sec:lp} gives a finite certificate.
\end{example}

Ex.~\ref{ex:parity} shows that the extension properties used in~\cite{PLG} do not suffice to establish bicompatibility. The proof of Thm.~\ref{thm:convergence} supplies this missing step by extracting limits of all fixed-order marginals along a common subsequence and verifying that these limits form a bicompatible sequence.
The following theorem shows that, under \Hc{D}, the optimal values of the hierarchy with the polarization constraint converge monotonically to the true optimum whenever the original problem is feasible.

\begin{theorem}[Correction of the first assertion of {\cite[Thm.~5]{PLG}}]\label{thm:convergence}
Let $\Fcal\ne\varnothing$ and let $\Pi$ satisfy \Hc{D}. Then the infima $p_n^\Pi$ are attained, and
\[
p_n^\Pi\le p_{n+1}^\Pi\le p^*\quad(n\ge2),\qquad\lim_{n\to\infty}p_n^\Pi=p^*.
\]
\end{theorem}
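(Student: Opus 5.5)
The proof splits into four steps: attainment, monotonicity, the upper bound, and the limit via a compactness argument.

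\emph{Attainment.} The set $\Kn n$ is compact, since it is a maximal tensor product of compact state spaces and is closed and bounded in finite dimensions. The constraints in~\eqref{eq:hierarchy} are linear equalities, so $\Omega_n^\Pi$ is a closed subset of $\Kn n$ and therefore compact. It is nonempty because $\Fcal\ne\varnothing$: for $(a,b)\in\Fcal$, the element $a^{\otimes n}\otimes b^{\otimes n}$ lies in it. Since $y\mapsto p(R_{n,1}y)$ is linear and hence continuous, the infimum $p_n^\Pi$ is attained.

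\emph{Monotonicity.} If $y\in\Omega_{n+1}^\Pi$, then $R_{n+1,n}y\in\Kn n$. This element is invariant under $\Sn_n\times\Sn_n$, because permutations of the first $n$ copies commute with tracing out the last copy. Its two-copy marginal equals $R_{n+1,2}y$, so the polarization constraint is inherited, and its one-copy marginal equals $R_{n+1,1}y$. Hence $p_n^\Pi\le p_{n+1}^\Pi$. The bound $p_n^\Pi\le p^*$ is~\eqref{eq:upper}.

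\emph{Limit.} The sequence is nondecreasing and bounded above, so $L=\lim_n p_n^\Pi\le p^*$ exists, and it remains to show $L\ge p^*$. For each $n\ge2$ pick a minimizer $y^{(n)}\in\Omega_n^\Pi$. For each fixed $k$, the marginals $R_{n,k}y^{(n)}$ with $n\ge k$ lie in the compact set $\Kn k$. By a diagonal argument we extract one subsequence $n_j$ along which $R_{n_j,k}y^{(n_j)}\to z_k$ for every $k$. We then check that $(z_k)$ is bicompatible.
\begin{itemize}
\item Each $z_k$ lies in $\Kn k$ because $\Kn k$ is closed.
\item $z_k$ is invariant under $S^A_\sigma\otimes S^B_\tau$ for $\sigma,\tau\in\Sn_k$. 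Such a permutation extends to a permutation in $\Sn_{n}\times\Sn_n$ fixing the discarded copies, and it commutes with $R_{n,k}$, so each $R_{n,k}y^{(n)}$ is already invariant and invariance passes to the limit.
\item $R_{k,l}z_k=z_l$, because $R_{k,l}R_{n,k}=R_{n,l}$ and $R_{k,l}$ is continuous.
\end{itemize}
Linearity and continuity of the constraint also give $\Pi\bigl((f\otimes f)\Sreg_2(z_2)\bigr)=0$. Thm.~\ref{thm:polar}, which relies on \Hc{D}, then provides a representing measure $\mu$ with $\supp\mu\subseteq\Fcal$. It follows that
\[
L=\lim_j p\bigl(R_{n_j,1}y^{(n_j)}\bigr)=p(z_1)=\int p(a,b)\,d\mu(a,b)\ge p^*.
\]

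The main obstacle is the limit step, specifically obtaining a genuinely bicompatible limit; this is exactly the gap Ex.~\ref{ex:parity} exhibits in the original proof. The key points are to take limits of marginals of every order, not only order two, and to use that independent $A$/$B$ permutation symmetry at level $n$ descends to all lower marginals. Everything else reduces to compactness, continuity and Thm.~\ref{thm:polar}.
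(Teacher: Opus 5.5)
Your proof is correct and follows essentially the same route as the paper: attainment by compactness, monotonicity via $R_{n+1,n}$, and a diagonal subsequence of all fixed-order marginals yielding a bicompatible limit. The only cosmetic difference is that you invoke Thm.~\ref{thm:polar} to get $\supp\mu\subseteq\Fcal$, whereas the paper repeats that argument inline: it shows that $M_\infty=(f\otimes f)\Sreg_2(z_2)$ is positive semidefinite, then applies Prop.~\ref{prop:equiv} to conclude $M_\infty=0$.
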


\begin{proof}
The set $\Kn n$ is compact, the constraints in~\eqref{eq:hierarchy} are closed, and the objective is linear. By~\eqref{eq:upper}, $\Omega_n^\Pi\ne\varnothing$ and $p_n^\Pi\le p^*$; hence the infimum is attained.

The partial trace $R_{n+1,n}$ preserves invariance under independent permutations and satisfies $R_{n,k}R_{n+1,n}=R_{n+1,k}$; hence it maps $\Omega_{n+1}^\Pi$ into $\Omega_n^\Pi$ and preserves the objective, so $p_n^\Pi\le p_{n+1}^\Pi$. Therefore $p_n^\Pi$ converges to some $L\le p^*$. It remains to prove $L\ge p^*$, which we do by constructing a limiting representing measure supported on $\Fcal$.

Choose minimizers $y^{(n)}\in\Omega_n^\Pi$ and define the finite-level operators
\[
M_n=(f\otimes f)\Sreg_2(R_{n,2}y^{(n)}),\qquad \Pi(M_n)=0.
\]
For each $k$, the marginals $R_{n,k}y^{(n)}$, $n\ge k$, lie in the compact set $\Kn k$. First choose a subsequence along which the one-copy marginals converge, then a further subsequence along which the two-copy marginals converge, and continue. Taking the $j$-th term from the $j$-th subsequence gives a single sequence of levels $n_j\to\infty$ along which every fixed-order marginal converges:
\begin{equation}\label{eq:diagonal}
R_{n_j,k}\,y^{(n_j)}\longrightarrow z_k\in\Kn k\qquad(k\ge1).
\end{equation}
Using the same sequence of levels for every marginal order allows compatibility to pass to the limit. Each $z_k$ is invariant under independent permutations, as a limit of invariant elements, and for $k'<k$,
\[
R_{k,k'}z_k=\lim_{j\to\infty}R_{k,k'}R_{n_j,k}\,y^{(n_j)}=\lim_{j\to\infty}R_{n_j,k'}\,y^{(n_j)}=z_{k'}.
\]
Thus $(z_k)$ is bicompatible. Let $\mu$ be a representing measure. By continuity and the de Finetti representation,
\[
\begin{aligned}
M_{n_j}&\longrightarrow M_\infty:=(f\otimes f)\Sreg_2(z_2),\\
M_\infty&=\int f(a,b)\otimes f(a,b)\,d\mu(a,b)\in\PSD(V).
\end{aligned}
\]
Consequently, $\Pi(M_\infty)=\lim_j\Pi(M_{n_j})=0$, and Prop.~\ref{prop:equiv} gives $M_\infty=0$. As in the proof of Thm.~\ref{thm:polar}, it follows that $f=0$ $\mu$-almost everywhere and $\supp\mu\subseteq\Fcal$. Since $\mu$ is supported on $\Fcal$, we have $p(a,b)\ge p^*$ $\mu$-almost everywhere. By continuity of the objective,
\[
L=\lim_jp_{n_j}^\Pi=p(z_1)=\int p(a,b)\,d\mu(a,b)\ge p^*.
\]
Together with $L\le p^*$, this proves $L=p^*$.
\end{proof}

\begin{remark}\label{rem:convergence}
(a) Thm.~\ref{thm:convergence} asserts convergence of the optimal values; it does not assert convergence of minimizers or equality $p_n^\Pi=p^*$ at a finite level. For $\Pi=\id$ in Ex.~\ref{ex:optimization}, \eqref{eq:idvalue} gives
\[
p_n^{\id}=-\frac1n<0=p^*\qquad(n\ge2),
\]
so the values converge to $p^*$ without attaining it at any finite level.

(b) Let $\Pi$ satisfy \Hc{D} and $\Fcal=\varnothing$. Then $\Omega_n^\Pi=\varnothing$ for some $n$, hence for all larger $n$ since $R_{n+1,n}(\Omega_{n+1}^\Pi)\subseteq\Omega_n^\Pi$: otherwise the diagonal argument, applied to arbitrary $y^{(n)}\in\Omega_n^\Pi$, yields a bicompatible sequence satisfying~\eqref{eq:constraint}, contradicting Thm.~\ref{thm:polar}. The argument gives no bound on $n$.
\end{remark}

The stronger hierarchy of \cite[Eq.~(29)]{PLG} does not use an auxiliary polarization map and requires no modification. Its convergence follows from Thm.~\ref{thm:convergence}, as the following corollary shows.

\begin{corollary}[Second assertion of {\cite[Thm.~5]{PLG}}]\label{cor:strong}
Let $\Fcal\ne\varnothing$. Then the infima $q_n$ are attained, and
\[
p_n^{\id}\le q_n\le q_{n+1}\le p^*\quad(n\ge2),\qquad\lim_{n\to\infty}q_n=p^*.
\]
If $\Fcal=\varnothing$, then $\Omega_n=\varnothing$ for all sufficiently large $n$.
\end{corollary}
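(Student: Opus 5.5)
The plan is to reduce everything to Thm.~\ref{thm:convergence} with $\Pi=\id$, which satisfies \Hc{D} by Rem.~\ref{rem:maps}, and then repeat the attainment, monotonicity and diagonal arguments for the stronger constraint~\eqref{eq:strong}.

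\textbf{Step 1: the inclusion $\Omega_n\subseteq\Omega_n^{\id}$.} This gives $p_n^{\id}\le q_n$. Let $y$ satisfy~\eqref{eq:strong}. Apply the unit effects $\one_{K_A}\otimes\one_{K_B}$ to the pairs $A_3B_3,\dots,A_nB_n$. These factors lie among the last $n-1$ pairs, where $\id_E$ acts, so the operations commute and yield $(f\otimes\id_E)\Sreg_2(R_{n,2}y)=0$. Then apply $f$ to the remaining $E$ factor to obtain $(f\otimes f)\Sreg_2(R_{n,2}y)=0$, which is the constraint~\eqref{eq:hierarchy} with $\Pi=\id$. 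The only care needed is to check that the reordering $\Sreg_n$ is compatible with $R_{n,2}$, i.e.\ that partial traces of pairs commute with $\Sreg$.

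\textbf{Step 2: attainment and monotonicity.} Attainment follows as before: $\Kn n$ is compact, \eqref{eq:strong} is a closed linear condition, and $\Omega_n\ne\varnothing$ with $q_n\le p^*$ by~\eqref{eq:upper}. For $q_n\le q_{n+1}$, show that $R_{n+1,n}$ maps $\Omega_{n+1}$ into $\Omega_n$. Permutation invariance is preserved as in the proof of Thm.~\ref{thm:convergence}. The strong constraint is preserved because tracing out the last pair $A_{n+1}B_{n+1}$ touches only an $\id_E$ factor, so it commutes with $f\otimes\id_E^{\otimes(n-1)}$ and maps zero to zero. The objective is also preserved.

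\textbf{Step 3: convergence.} The chain $p_n^{\id}\le q_n\le p^*$ together with Thm.~\ref{thm:convergence} ($p_n^{\id}\to p^*$) gives $q_n\to p^*$ by squeezing.

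\textbf{Step 4: the infeasible case.} Here I would use Step 1 again: $\Omega_n\subseteq\Omega_n^{\id}$, and $\Omega_n^{\id}=\varnothing$ for all large $n$ by Rem.~\ref{rem:convergence}(b) applied to $\Pi=\id$. This remark in turn rests on the diagonal argument and Thm.~\ref{thm:polar}.

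\textbf{Main obstacle.} The only nonroutine point is the bookkeeping in Step 1. One must verify that marginalizing pairs $3,\dots,n$ and then applying $f$ to pair $2$ is a legitimate linear consequence of~\eqref{eq:strong}, given the ordering conventions $A_1\cdots A_nB_1\cdots B_n$ versus $A_1B_1\cdots A_nB_n$. I would write $R_{n,2}$ after $\Sreg_n$ as $\id_E^{\otimes2}\otimes(\one_{K_A}\otimes\one_{K_B})^{\otimes(n-2)}$ and check the identity on product tensors, where it is immediate; linearity then extends it to all of $\Kn n$.
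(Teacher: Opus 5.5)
Your proposal is correct and follows the paper's proof essentially step for step. The steps match: the inclusion $\Omega_n\subseteq\Omega_n^{\id}$ via $\id_V\otimes f\otimes(\one_{K_A}\otimes\one_{K_B})^{\otimes(n-2)}$ applied to~\eqref{eq:strong}; monotonicity by tracing out the last pair; the squeeze using Thm.~\ref{thm:convergence} with $\Pi=\id$; and Rem.~\ref{rem:convergence}(b) for the infeasible case. There is one harmless index slip: at level $n+1$ the constraint map is $f\otimes\id_E^{\otimes n}$, and tracing out the last pair turns it into $f\otimes\id_E^{\otimes(n-1)}$.
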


\begin{proof}
The stronger constraint applies $f$ to the first $AB$-pair while retaining all remaining pairs. Applying $f$ to the second pair and tracing out the others gives the second-moment constraint with $\Pi=\id$. Explicitly, applying
\[
\id_V\otimes f\otimes(\one_{K_A}\otimes\one_{K_B})^{\otimes(n-2)}
\]
to~\eqref{eq:strong} gives $(f\otimes f)\Sreg_2(R_{n,2}y)=0$, as in \cite[Eq.~(22)]{PLG}; hence $\Omega_n\subseteq\Omega_n^{\id}$ and $p_n^{\id}\le q_n$.

Applying
\[
\id_V\otimes\id_E^{\otimes(n-1)}\otimes(\one_{K_A}\otimes\one_{K_B})
\]
to~\eqref{eq:strong} at level $n+1$ shows $R_{n+1,n}(\Omega_{n+1})\subseteq\Omega_n$. Since marginalization also preserves the objective, it follows that $q_n\le q_{n+1}$. Attainment and $q_n\le p^*$ follow as in the proof of Thm.~\ref{thm:convergence}.

Since $\id$ satisfies \Hc{D} (Rem.~\ref{rem:maps}), Thm.~\ref{thm:convergence} gives $p_n^{\id}\to p^*$. The bounds $p_n^{\id}\le q_n\le p^*$ therefore imply $q_n\to p^*$. Rem.~\ref{rem:convergence}(b), together with $\Omega_n\subseteq\Omega_n^{\id}$, gives the last assertion.
\end{proof}

\begin{remark}[Application to nonnegative matrix rank]\label{rem:application}
The insufficiency of \Hc{2} does not invalidate the finite-level infeasibility conclusions in \cite[Fig.~1]{PLG}. The blue and red regions record infeasibility at $n=3$ for the stronger hierarchy~\eqref{eq:strong} and the hierarchy with the polarization constraint~\eqref{eq:hierarchy}, respectively.\footnote{The polarization map used for the red region is not explicitly specified in~\cite{PLG}. All three maps proposed there satisfy \Hc{D}; see Rem.~\ref{rem:maps}. Thus, if one of these maps was used, our corrected convergence result applies to that hierarchy as well.} If the required nonnegative factorization exists, every relaxation is feasible; see~\eqref{eq:upper}. Hence infeasibility at any level rules out such a factorization. This conclusion does not require asymptotic convergence.

These certificates alone do not establish that every infeasible instance will eventually be detected. For the stronger hierarchy, this guarantee follows from Cor.~\ref{cor:strong}; for the hierarchy with the polarization constraint, it follows under \Hc{D} from Rem.~\ref{rem:convergence}(b).
\end{remark}

\begin{remark}[Several state spaces and higher-degree polynomials]\label{rem:general}
The counterexamples and corrections above also apply to the extensions in Sec.~2.1 of~\cite{PLG}, which allow several state spaces $K_1,\ldots,K_m$ and higher-degree polynomial objectives and constraints~\cite[Eqs.~(25), (26)]{PLG}. Convergence is stated to follow by the same reasoning~\cite[p.~23]{PLG}. For two state spaces and $\vec\ell=(1,1)$ in \cite[Eq.~(23)]{PLG}, \cite[Eq.~(25)]{PLG} is \cite[Eq.~(28)]{PLG}, so Ex.~\ref{ex:optimization} applies.

Choose $\ell\ge1$ such that $f$ and $p$ have degree at most $\ell$ in each local state. These polynomials can be represented by linear maps acting on $\ell$ copies of each state space, using unit functionals to accommodate lower-degree terms. Consequently, for $n\ge2\ell$, the objective uses $\ell$-copy marginals, while the squared-polarization constraint uses $2\ell$-copy marginals. The stronger hierarchy retains its full $n$-copy constraint.

The compactness argument again extracts limits of all fixed-order marginals along a common subsequence. Compatibility and invariance under independent permutations of the copies of each state space pass to these limits, so the multipartite de Finetti representation gives a measure $\mu$ on $K_1\times\cdots\times K_m$. On each product component of this representation, the multilinearized constraint map evaluates to the original polynomial $f$. Thus, writing $\mathbf a=(a_1,\ldots,a_m)$, the limiting polarization constraint is again
\[
\Pi\left(\int f(\mathbf a)\otimes f(\mathbf a)\,d\mu(\mathbf a)\right)=0,
\]
and the limiting objective is $\int p(\mathbf a)\,d\mu(\mathbf a)$. The operator inside $\Pi$ is positive semidefinite. Condition \Hc{D} forces it to vanish, so the representing measure is supported on solutions of the original polynomial constraint. Hence the feasibility and convergence arguments of this section carry over.
\end{remark}

\section{Linear-programming verification}\label{sec:lp}
We first derive exact formulas for the hierarchy values in Ex.~\ref{ex:optimization} and compare them with numerical solutions of the corresponding linear programs. We then give a finite certificate for the failure of bicompatible extendibility in Ex.~\ref{ex:parity} and quantify the distance to the set of extendible marginals.

\subsection{Exact hierarchy values}
In Ex.~\ref{ex:optimization}, $K_A$ is a simplex with vertices $\sigma_0,\dots,\sigma_3$, so $\Kn n$ is the set of probability distributions on $\{0,1,2,3\}^n$, where the label $i$ stands for $\sigma_i$. A permutation orbit is specified by its occupation vector $k=(k_0,k_1,k_2,k_3)$, where $k_i$ counts the positions carrying label $i$ and $\sum_i k_i=n$. For example, the strings $(0,1,1)$, $(1,0,1)$, and $(1,1,0)$ have the same occupation vector $k=(1,2,0,0)$. Permutation invariance assigns them equal probabilities, so we need only record their total probability $\alpha_k$. In general, a distribution invariant under $\Sn_n$ is determined by these orbit masses $\alpha_k\ge0$, $\sum_k\alpha_k=1$; there are $\binom{n+3}3$ orbits.

Let $s=k_1+k_2+k_3$ count the positions carrying a nonzero label, and let $\mathbb E_\alpha$ denote expectation with respect to $\alpha$. The objective and the matrix $M_n=(f\otimes f)\Sreg_2(R_{n,2}y)$ are
\begin{equation}\label{eq:count}
p(R_{n,1}y)=-\frac{\mathbb E_\alpha[s]}{n},\qquad
(M_n)_{ij}=\frac{\mathbb E_\alpha[k_i(k_j-\delta_{ij})]}{n(n-1)}\qquad(1\le i,j\le3).
\end{equation}
For a fixed occupation vector $k$, a single position has a nonzero label with probability $s/n$. The two-copy marginal samples two distinct positions without replacement. There are $n(n-1)$ ordered choices; the number with labels $i,j$ is $k_i k_j$ if $i\ne j$, and $k_i(k_i-1)$ if $i=j$. Averaging these expressions over $\alpha$ gives~\eqref{eq:count}. Thus $p_n^\Pi$ is the value of the linear program that minimizes $-\mathbb E_\alpha[s]/n$ subject to $\Pi(M_n)=0$.

At a finite level, $M_n$ records correlations between two distinct copies and need not be positive semidefinite. The positivity of the second moment in~\eqref{eq:moment} follows from the de Finetti representation of a full bicompatible sequence.

For $\Pi=\Pi_3$, the matrix $M_n$ is symmetric, so $\Pi_3(M_n)=0$ means $M_n=\frac13(\tr M_n)I_3$. The off-diagonal constraints $\mathbb E_\alpha[k_ik_j]=0$, $i\ne j$, allow at most one distinct nonzero label per orbit in the support of $\alpha$. This label may nevertheless occur in all $n$ positions. The value $-1$ in~\eqref{eq:badvalue} requires $s=n$ on the support, and the diagonal constraints then force mass $\frac13$ on each of the orbits $k=(0,n,0,0),(0,0,n,0),(0,0,0,n)$. Hence the minimizer is unique; the gap in~\eqref{eq:badvalue} is not due to multiple minimizers.

For $\Pi=\id$, $M_n=0$ gives
\[
\sum_{i,j}(M_n)_{ij}=\frac{\mathbb E_\alpha[s(s-1)]}{n(n-1)}=0,
\]
so $s\le1$ on the support: at most one position may carry a nonzero label. Its contribution to the one-copy marginal is therefore at most $1/n$, giving $p(R_{n,1}y)\ge-1/n$, with equality for the orbit $k=(n-1,1,0,0)$. Hence
\begin{equation}\label{eq:idvalue}
p_n^{\id}=-\frac1n\longrightarrow0=p^*.
\end{equation}
The same lower bound has a dual certificate. After averaging, the following identity expresses the objective minus $-1/n$ as a nonnegative quantity plus terms that vanish under $M_n=0$:
\[
-\frac sn+\frac1n+(n-1)\sum_{i,j=1}^3\frac{k_i(k_j-\delta_{ij})}{n(n-1)}=\frac{(s-1)^2}{n}\ge0;
\]
averaging over $\alpha$ and imposing $M_n=0$ gives $p(R_{n,1}y)\ge-1/n$. The minimizers are the mixtures of the three orbits with $s=1$.

\begin{table}[htbp]
\centering
\setlength{\belowcaptionskip}{8pt}
\caption{Values of the linear programs of Sec.~\ref{sec:lp} for Ex.~\ref{ex:optimization}, rounded to five decimal places; $p^*=0$. The exact values for all $n\ge2$ are given by~\eqref{eq:badvalue} and~\eqref{eq:idvalue}.}
\label{tab:numerics}
\begin{tabular}{rrrr}
\toprule
$n$ & Orbits & $p_n^{\Pi_3}$ & $p_n^{\id}$\\\midrule
2  & 10   & $-1.00000$ & $-0.50000$\\
4  & 35   & $-1.00000$ & $-0.25000$\\
8  & 165  & $-1.00000$ & $-0.12500$\\
16 & 969  & $-1.00000$ & $-0.06250$\\
32 & 6545 & $-1.00000$ & $-0.03125$\\\bottomrule
\end{tabular}
\end{table}

\subsection{A finite extension certificate}
We now show that the obstruction in Ex.~\ref{ex:parity} already appears when extending to three copies of each party. Suppose that $\Sreg_2^{-1}x_2$ is the marginal on $A_1A_2B_1B_2$ of a distribution on three A bits and three B bits that is invariant under independent permutations. Let $Q_{kl}$ be the probability that $k$ of the three A bits and $l$ of the three B bits equal $1$. If three bits contain $k$ ones, the probability of finding $r$ ones when choosing two positions uniformly is
\[
\eta_r(k)=\frac{\binom kr\binom{3-k}{2-r}}{3}.
\]
The denominator is the number $\binom32=3$ of choices of two positions; binomial coefficients outside their usual range are taken to be zero. By invariance under independent permutations, the probability that $r$ of the first two A bits and $t$ of the first two B bits equal $1$ is
\[
T_{rt}=\sum_{k,l=0}^3Q_{kl}\,\eta_r(k)\,\eta_t(l).
\]
Fix $(k,l)$ and put $h=\eta_1(k)$, $g=\eta_1(l)$. Since $\eta_0+\eta_2=1-\eta_1$, the contribution of $(k,l)$ to $T_{01}+T_{10}+T_{12}+T_{21}-T_{11}$ is $Q_{kl}$ times
\[
g(1-h)+h(1-g)-hg=h+g-3hg,
\]
which is nonnegative because $h,g\in\{0,\frac23\}$. Hence
\begin{equation}\label{eq:separator}
T_{11}\le T_{01}+T_{10}+T_{12}+T_{21}.
\end{equation}
The right-hand side is the probability of odd total parity among the four bits. Thus any such extension requires this probability to be at least the probability that each local pair contains exactly one $1$. For $x_2$, $T_{11}=\frac12$, $T_{00}=T_{02}=T_{20}=T_{22}=\frac18$ and all other $T_{rt}$ vanish, which violates~\eqref{eq:separator}. Thus the corresponding feasibility linear program in the $16$ variables $Q_{kl}$ is infeasible.

The obstruction persists under small perturbations: every two-copy marginal admitting such a three-copy extension is at total variation distance at least $\frac14$ from $x_2$. We first derive this lower bound from~\eqref{eq:separator}, then give an extension attaining it. Both $x_2$ and every candidate marginal are uniform within each orbit of independent permutations, so their total variation distance equals that between their distributions of occupation counts. Denote these distributions by $T^0$ and $T$, respectively, and use the convention
\[
d_{\mathrm{TV}}(T^0,T)=\frac12\sum_{r,t=0}^2|T^0_{rt}-T_{rt}|.
\]
\Needspace{7\baselineskip}
Let $\varphi$ take the value $1$ at $(1,1)$, the value $-1$ at $(0,1),(1,0),(1,2),(2,1)$, and zero elsewhere. Then $\mathbb E_{T^0}\varphi=\frac12$, whereas~\eqref{eq:separator} gives $\mathbb E_T\varphi\le0$. Since $|\varphi|\le1$, the distance is at least $\frac14$:
\[
\frac12\le\bigl|\mathbb E_{T^0}\varphi-\mathbb E_T\varphi\bigr|
\le 2d_{\mathrm{TV}}(T^0,T).
\]

To attain the lower bound, take
\[
Q_{kl}=\begin{cases}
\frac9{64}, & (k,l)\in\{1,2\}^2,\\[2pt]
\frac7{64}, & (k,l)\in\{0,3\}^2,\\[2pt]
0, & \text{otherwise}.
\end{cases}
\]
The resulting distribution $T^\star$ retains the four corner probabilities $\frac18$, has $T^\star_{11}=\frac14$, and assigns probability $\frac1{16}$ to each of the four remaining cells. Hence
\[
d_{\mathrm{TV}}(T^0,T^\star)=\frac12\left(\frac14+4\cdot\frac1{16}\right)=\frac14.
\]

\section*{Acknowledgements}
The author thanks Martin Pl\'avala and David Gross for discussions and for supporting this manuscript. The author acknowledges support from the European Research Council (ERC Grant Agreement No.~948139) and the Excellence Cluster -- Matter and Light for Quantum Computing (ML4Q-2).

\section*{Data availability}
The code, the archived output of the linear programs, and a script verifying the certificates of Sec.~\ref{sec:lp} in exact rational arithmetic are provided as supplementary material.

\section*{Declaration of generative AI and AI-assisted technologies in the manuscript preparation process}
Generative AI tools supported manuscript preparation, mathematical analysis, and code development. Any content not developed by the author was reviewed by the author, who takes full responsibility for the published work.

\end{document}